\def\b{\beta}
\def\hb{\hat{\beta}}
\def\a{\alpha}
\def\n{\noindent}
\def\mcS{\mathcal{S}}
\newtheorem{theorem}{Theorem}
\newtheorem{lemma}[theorem]{Lemma}
\newtheorem{corollary}[theorem]{Corollary}
\newtheorem{definition}{Definition}
\newtheorem{remark}{Remark}
\author[Pawe{\l} Hitczenko]{Pawe{\l} Hitczenko${}^\dagger$}
 \thanks{$\dagger$ Partially supported by a grant from 
 Simons Foundation (grant \#208766 to Pawe{\l} Hitczenko)}
\address{Department of Mathematics, Drexel University, Philadelphia, 
PA  19104, USA} 
\email{phitczenko@math.drexel.edu}
\author[Amanda Parshall]{Amanda Parshall${}^\dagger$}
\address{Department of Mathematics, Drexel University, Philadelphia, 
PA  19104, USA} 
\email{agp47@drexel.edu}
\title[Distribution of parameters in staircase tableaux]{On the distribution of
  parameters
\\ 
in  random weighted  staircase tableaux
}
\keywords{staircase tableau, asymmetric exclusion process, Poisson distribution, weak convergence}
\begin{document}
\maketitle
\begin{abstract}
%\paragraph{Abstract.}
  In this paper, we study staircase tableaux, a combinatorial object
  introduced due to its connections with the asymmetric
  exclusion process (ASEP) and Askey-Wilson polynomials. Due to their
  interesting connections, staircase tableaux have been the object of
  study in many recent papers. More specific to this paper, the
  distribution of various parameters in random staircase tableaux has
  been studied. There have been interesting results on parameters
  along the main diagonal, however, no such results have appeared for
  other diagonals. It was conjectured that the distribution of the
  number of symbols
  along the $k$th diagonal is asymptotically Poisson as $k$ and the
  size of the tableau tend to infinity.  We partially prove this
  conjecture; more specifically we prove it for the second main diagonal.
 \end{abstract}

\section{Introduction}
\label{sec:in}
In this paper, we study staircase tableaux, a combinatorial object
introduced (in \cite{CW1}, \cite{CW2}) due to connections with the asymmetric exclusion
process (ASEP)  and Askey-Wilson polynomials. The
ASEP can be defined as a Markov chain with $n$ sites, with at most
one particle occupying each site. Particles may jump to any
neighboring empty site with rate $u$ to the right and rate $q$ to the
left. Particles may enter and exit at the first site with rates $\a$ and $\gamma$ respectively. Similarly, particles may enter and exit the last site with rates $\delta$ and $\b$. The ASEP is an interesting particle model that has been
studied extensively in mathematics and physics. It has also been studied in many other fields, including computational biology \cite{Bu}, and biochemistry, specifically as a primitive model for protein synthesis \cite{GMP}.
Staircase tableaux were introduced per a connection between the steady state distribution of the ASEP and the generating function for staircase tableaux \cite{CW2}. See Section \ref{DN} for a discussion of the ASEP and its connection with staircase tableaux.

In addition to interest in its own right, the ASEP has been known to
have interesting connections in combinatorics and
analysis. Consequently, staircase tableaux have similarly been
connected to many combinatorial objects and a family of
polynomials. In fact, the generating function for staircase tableaux
has been used to give a formula for the moments of Askey-Wilson
polynomials  \cite{CW2}, \cite{CSSW}. Staircase tableaux have also
inherited many interesting properties from other types of tableaux
(See \cite{ABN}, \cite{CH}, \cite{CN}, \cite{CW2}, \cite{CW4}, \cite{CW3},
\cite{HJ3}, \cite{SW}). We refer to \cite[Table~1]{HJ} for a
description of some of the bijections between the various types of the
tableaux. \\

Due to all these interesting connections, staircase tableaux have been the
object of study in many recent papers. In \cite{D-HH}, a probabilistic
approach was developed and the distributions of various parameters were
studied. In fact, it was shown in \cite{D-HH} that the distribution of
the number of $\a$'s and $\b$'s in a staircase tableau are
asymptotically normal, and distributions regarding the  main diagonal
were also given (see Section \ref{SAB}). In \cite{HJ}, the
distribution of each box in a staircase tableau was given, and it was
conjectured that the distribution of the number of symbols along the $k$th diagonal
is asymptotically Poisson as $k$ and the size of the tableau  tend to infinity. The main result of this
paper is the proof of this in a special case when $k=n-1$. That is, we show
that the distribution of the number of symbols along the second main diagonal is
asymptotically Poisson with parameter $1$, see
Theorem~\ref{thm:symb}. Similarly, we show that the number of $\a$'s
(resp. $\b$'s) along the second main diagonal is
asymptotically Poisson with parameter $1/2$, see
Theorem~\ref{DT} and Corollary~\ref{BC}.

\section{Definitions and Notation} \label{DN}
Staircase tableaux were first introduced in \cite{CW1} and \cite{CW2} as follows:
\begin{definition} \label{DEFST}
A staircase tableau of size n is a Young diagram of shape (n, n-1, ..., 1) such that:
\begin{enumerate}
\item The boxes are empty or contain an $\a$, $\b$, $\gamma$, or $\delta$.
\item All boxes in the same column and above an $\a$ or $\gamma$ are empty.
\item All boxes in the same row and to the left of an $\beta$ or $\delta$ are empty.
\item Every box on the diagonal contains a symbol.
\end{enumerate}
\end{definition}
The rows and columns in a staircase tableau are numbered from $1$ through $n$, beginning with the box in the NW-corner and continuing south and east respectively. Each box is numbered $(i,j)$ where $i,j \in \{ 1, 2, ..., n \}$. Note that $i + j \leq n + 1$. We refer to the collection of boxes $(n-i+1, i)$ such that $i=1,2, ..., n$ as the main diagonal, and the collection of boxes $(n-i, i)$ such that $i=1,2, ..., n-1$ as the second main diagonal.

Following the conventions of \cite{HJ}, $\mcS_{n}$ is the set of all staircase tableaux of size $n$. For a given $S \in \mcS_{n}$, the number of $\a$'s, $\gamma$'s, $\b$'s and $\delta$'s in $S$ are denoted by $N_{\a}, N_{\b}, N_{\gamma},$ and $N_{\delta}$ respectively. The weight of $S$ is the product of all symbols in $S$:
\[
wt(S) = \a^{N_{\a}}\b^{N_{\b}}\gamma^{N_{\gamma}}\delta^{N_{\delta}}.
\]
It was known (see e.g. \cite{CD-H}) that the generating function $Z_{n}(\alpha, \beta, \gamma, \delta) := \sum_{S \in \mcS_{n}} wt(S)$ is equal to the product:
\begin{equation}\label{Z4}
Z_{n}(\a, \b, \gamma, \delta)=\prod^{n-1}_{i=0} (\a + \b + \delta + \gamma + i(\a + \gamma)(\b + \delta)).
\end{equation}

\vspace{1cm}

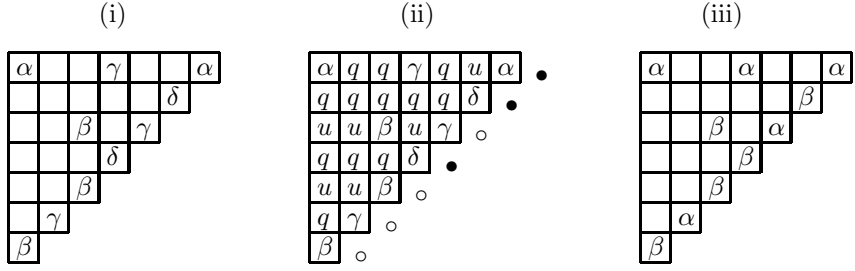
\begin{figure}[htbp] 
\setlength{\unitlength}{0.4cm}%\setlength{linewidth=0.4mm}
\begin{center}
\begin{picture}
(0,0)(15,8)\thicklines

\put(3,8){(i)} \put(13,8){(ii)}\put(23,8){(iii)}

%first

%vertical lines
\put(0,0){\line(0,1){7}}
\put(1,0){\line(0,1){7}}
\put(2,1){\line(0,1){6}}
\put(3,2){\line(0,1){5}}
\put(4,3){\line(0,1){4}}
\put(5,4){\line(0,1){3}}
\put(6,5){\line(0,1){2}}
\put(7,6){\line(0,1){1}}

%horizontal lines
\put(0,7){\line(1,0){7}}
\put(0,6){\line(1,0){7}}
\put(0,5){\line(1,0){6}}
\put(0,4){\line(1,0){5}}
\put(0,3){\line(1,0){4}}
\put(0,2){\line(1,0){3}}
\put(0,1){\line(1,0){2}}
\put(0,0){\line(1,0){1}}

%greek symbols
\put(3.25,6.25){$\gamma$}
\put(6.25, 6.25){$\a$}
\put(5.25, 5.25){$\delta$}
\put(2.25, 4.25){$\b$}
\put(4.25, 4.25){$\gamma$}
\put(3.25, 3.25){$\delta$}
\put(2.25, 2.25){$\b$}
\put(1.25, 1.25){$\gamma$}
\put(0.25, 0.25){$\b$}
\put(0.25, 6.25){$\a$}

%second
%(C,R)
%C: 10-17, R: 0-6
%vertical lines
\put(10,0){\line(0,1){7}}
\put(11,0){\line(0,1){7}}
\put(12,1){\line(0,1){6}}
\put(13,2){\line(0,1){5}}
\put(14,3){\line(0,1){4}}
\put(15,4){\line(0,1){3}}
\put(16,5){\line(0,1){2}}
\put(17,6){\line(0,1){1}}

%horizontal lines
\put(10,7){\line(1,0){7}}
\put(10,6){\line(1,0){7}}
\put(10,5){\line(1,0){6}}
\put(10,4){\line(1,0){5}}
\put(10,3){\line(1,0){4}}
\put(10,2){\line(1,0){3}}
\put(10,1){\line(1,0){2}}
\put(10,0){\line(1,0){1}}

%greek symbols
\put(16.25, 6.25){$\a$}
\put(15.25, 6.25){$u$}
\put(14.25, 6.25){$q$}
\put(13.25, 6.25){$\gamma$}
\put(12.25, 6.25){$q$}
\put(11.25, 6.25){$q$}
\put(10.25, 6.25){$\a$}
\put(15.25, 5.25){$\delta$}
\put(14.25, 5.25){$q$}
\put(13.25, 5.25){$q$}
\put(12.25, 5.25){$q$}
\put(11.25, 5.25){$q$}
\put(10.25, 5.25){$q$}
\put(10.25, 4.25){$u$}
\put(11.25, 4.25){$u$}
\put(12.25, 4.25){$\b$}
\put(13.25, 4.25){$u$}
\put(14.25, 4.25){$\gamma$}
\put(10.25, 3.25){$q$}
\put(11.25, 3.25){$q$}
\put(12.25, 3.25){$q$}
\put(13.25, 3.25){$\delta$}
\put(10.25, 2.25){$u$}
\put(11.25, 2.25){$u$}
\put(12.25, 2.25){$\beta$}
\put(10.25, 1.25){$q$}
\put(11.25, 1.25){$\gamma$}
\put(10.25, 0.25){$\b$}

\put(17.5,6.0){$\bullet$}
\put(16.5,5.0){$\bullet$}
\put(15.5,4.0){$\circ$}
\put(14.5,3.0){$\bullet$}
\put(13.5,2.0){$\circ$}
\put(12.5,1.0){$\circ$}
\put(11.5,0.0){$\circ$}

%3rd
%vertical lines
\put(21,0){\line(0,1){7}}
\put(22,0){\line(0,1){7}}
\put(23,1){\line(0,1){6}}
\put(24,2){\line(0,1){5}}
\put(25,3){\line(0,1){4}}
\put(26,4){\line(0,1){3}}
\put(27,5){\line(0,1){2}}
\put(28,6){\line(0,1){1}}

%horizontal lines
\put(21,7){\line(1,0){7}}
\put(21,6){\line(1,0){7}}
\put(21,5){\line(1,0){6}}
\put(21,4){\line(1,0){5}}
\put(21,3){\line(1,0){4}}
\put(21,2){\line(1,0){3}}
\put(21,1){\line(1,0){2}}
\put(21,0){\line(1,0){1}}

%greek symbols
\put(24.25,6.25){$\a$}
\put(27.25, 6.25){$\a$}
\put(26.25, 5.25){$\b$}
\put(23.25, 4.25){$\b$}
\put(25.25, 4.25){$\a$}
\put(24.25, 3.25){$\b$}
\put(23.25, 2.25){$\b$}
\put(22.25, 1.25){$\a$}
\put(21.25, 0.25){$\b$}
\put(21.25, 6.25){$\a$}

\end{picture}

\vspace{3cm}
\caption{A staircase tableau of size $7$ with weight $\a^{2}
  \b^{3} \delta^{2} \gamma^{3}$. (ii) The extension of (i) to a
  staircase tableau of weight $\a^{2} \b^{3} \delta^{2}
  \gamma^{3} u^{6} q^{13}$ 
and type $\bullet \bullet\circ  \bullet \circ \circ \circ$. 
(iii) The $\a/\b$-staircase tableau obtained from  (i) by replacing $\gamma$'s with $\a$'s and $\delta$'s with $\b$'s. It's weight is $\a^{5}\b^{5}$. }
\label{F1}
\end{center}

\end{figure}

Notice that there is an involution on the staircase tableaux of a
given size obtained by interchanging the rows and the columns, $\a$'s and $\b$'s, and $\gamma$'s and $\delta$'s, see further \cite{CD-H}. In particular, the fact that $\a$'s and $\b$'s are identical up to this involution allows us to extend results for $\a$'s to results for $\b$'s.

The connection between staircase tableaux and the ASEP requires an
extension of this preceding definition. After following the rules from
Definition \ref{DEFST}, we then fill all the empty boxes with $u$'s
and $q$'s, the rates at which particles in the ASEP jump to the right
and left respectively. We do so by first filling all boxes to the left
of a $\b$ with a $u$ and to the left of a $\delta$ with a $q$. Then,
we fill the empty boxes with a $u$ if it is above an $\a$ or a $\delta$, and $q$ otherwise. The weight of a staircase tableau filled as such is defined in the same way, the product of the parameters in each box. Also, the total weight of all such staircase tableaux, which we denote by $\mcS^{'}_{n}$, is given by:
\[
Z_{n}(\a, \b, \gamma, \delta, q, u) := \sum_{S \in \mcS^{'}_{n}} wt(S).
\]
Then, each staircase tableau of size $n$ is associated with a state of
the ASEP with $n$ sites (See Figure \ref{F1}). This is done by
aligning the Markov chain with the diagonal entries of the staircase tableau. A site is filled if the corresponding diagonal entry is an $\a$ or a $\gamma$ and a site is empty if the corresponding diagonal entry is a $\b$ or a $\delta$. Each staircase tableau's associated state of the ASEP is called its type.

Using this association, it was shown in \cite{CW2} that the steady state probability that the ASEP is in state $\eta$ is given by:
\[
\frac{\sum_{T \in \mathfrak{T}} wt(T)}{Z_{n}}, 
\]
where $\mathfrak{T}$ is the set of all staircase tableau of type $\eta$.

For the purposes of this paper, we will consider %even 
more simplified
staircase tableaux, namely $\a/\b$-staircase tableaux as
introduced in \cite{HJ}, which are staircase tableaux limited to the
symbols $\a$ and $\b$. The set $\overline{\mcS}_{n} \subset
\mcS_{n}$ denotes the set of all such staircase tableaux. Since the symbols $\a$ and $\gamma$ follow the same rules in the definition, as do $\b$ and $\delta$, any $S \in \mcS_{n}$ can be obtained from an $S^{'} \in \overline{\mcS}_{n}$ by replacing the appropriate $\a$'s with $\gamma$'s and $\b$'s with $\delta$'s. 

The generating function of $\a/\b$-staircase tableaux is:
\[
Z_{n}(\a, \b) := \sum_{S \in \overline{\mcS}_{n}} wt(S) = Z_{n}(\a, \b, 0, 0)
\]
and it follows from $(\ref{Z4})$ that it 
is simply:
\[
Z_{n}(\a, \b) = \a^{n}\b^{n}(a + b)^{\overline{n}}
\]
where $a:= \a^{-1}$ and $b:=\b^{-1}$, a notation that will be used frequently throughout this paper, and $(a+b)^{\overline{n}}$ is the rising factorial of $(a+b)$, i.e. $(a+b)^{\overline{n}} = (a+b)((a+b)+1)\cdots((a+b)+n-1)$.

We wish to consider random staircase tableaux as was done in
\cite{D-HH} but it suffices to study random $\a/\b$-staircase
tableaux, denoted by $S_{n, \a, \b}$, as was done in
\cite{HJ}. All of our results for random $\a/\b$-staircase
tableaux can be extended to random staircase tableaux with all four
parameters, $\a, \gamma, \b, \delta$. This is done by
considering $S_{n, \a + \gamma, \b + \delta}$ and randomly
replacing each $\a$ with $\gamma$ with probability
$\frac{\gamma}{\a + \gamma}$ and similarly, each $\b$ with
$\delta$ with probability $\frac{\delta}{\b + \delta}$
independently for each occurrence. Notice that $Z_{n}(\a, \b, \gamma, \delta) = Z_{n}(\a + \gamma, \b + \delta)$. We also allow all parameters to be arbitrary positive real numbers,  i.e. $\a, \b \in (0, \infty)$, allowing $\a = \infty$ by fixing $\b$ and taking the limit or vice versa, or $\a=\b=\infty$ by taking the limit. The following is a formal definition as in \cite{HJ}:

\begin{definition}
For all $n \geq 1$, $\a, \b \in [0, \infty)$ with $(\a,
\b) \neq (0,0)$, $S_{n,\a,\b}$ is defined to be a random
$\a/\b$-staircase tableau in $\overline{\mcS}_{n}$ with respect
to the probability distribution on $\overline{\mcS}_{n}$ given by:
\[
\forall S \in \overline{\mcS}_{n},\hspace{5mm} \mathbb{P}(S_{n,\a,\b} = S) = \frac{wt(S)}{Z_{n}(\a,\b)} = \frac{\a^{N_{\a}}\b^{N_{\b}}}{Z_{n}(\a,\b)}.
\]
We will also write is as 
\[\mathcal{L}(S_{n,\a,\b})=\frac{\a^{N_\a}\b^{N_\b}}{Z_n(\a,\b)}.\]
\end{definition}
Using this definition, Hitczenko and Janson presented the distribution
of a given box in a random staircase tableau.
If a box is on the main diagonal, the distribution is (see
\cite[Theorem~7.1]{HJ}):
\begin{equation}\label{1BOXD}
\mathbb{P}(S_{n, \a, \b}(i, n+ 1 - i) = \a) = \frac{n-i+b}{n + a + b - 1}.
\end{equation}
Since a box on the main diagonal is never empty, the $\b$ case follows trivially.

If a box is not on the main diagonal, its distribution is (see
\cite[Theorem~7.2]{HJ}):
\begin{eqnarray}  && 
\label{1BOXA}
\mathbb{P}(S_{n, \a, \b}(i, j) = \a) = \frac{j - 1 + b}{(i + j + a + b - 1)(i + j + a + b - 2)} \\ &&
\label{1BOXB}
\mathbb{P}(S_{n, \a, \b}(i, j) = \b) = \frac{i - 1 + a}{(i + j + a + b - 1)(i + j + a + b - 2)}. 
\end{eqnarray}

\section{Subtableaux and Preliminaries}
For an arbitrary $S \in \overline{\mcS}_{n}$ and an arbitrary box $(i,
j)$ in S, define $S[i,j]$ to be the subtableau in
$\overline{\mcS}_{n-i-j+2}$ obtained by deleting the first $i-1$ rows
and $j-1$ columns, see \cite{HJ}. 
The following statement was proven in \cite[Theorem~6.1]{HJ} and is a useful tool in our results:
\begin{equation} \label{SUBT}
S_{n,\alpha, \beta}[i, j] \stackrel{d}{=} S_{n-i-j+2, \hat a, \hat b}, \mbox{ with } \hat{a} = a+i-1 \mbox{ and } \hat{b}=b+j-1.
\end{equation}
The following two lemmas consider the probability of an arbitrary
staircase tableau in $\overline{\mcS}_n$ that is conditioned on having an $\alpha$ or a $\beta$ in the box $(n-1, 1)$. The statements follow almost immediately from the definition of a staircase tableau, but will be used frequently throughout the paper.
\begin{lemma}
\label{SWCorner}
If $S_{n, \a, \b}$ is conditioned on $S_{n, \a, \b}(n-1,
1)= \a$, then the subtableau $S_{n, \a, \b} [1, 3]
\stackrel{d}{=} S_{n-2, \a, \b}$, that is
\[\mathcal{L}(S_{n,\a,\b}\ |\ S_{n,\a,\b}(n-1,1)=\a)=\mathcal{L}(S_{n-2,\a,\b}).\] 
\end{lemma}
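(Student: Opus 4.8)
The plan is to realize the conditional law through an explicit weight-preserving bijection between $\{S\in\overline{\mcS}_n : S(n-1,1)=\a\}$ and $\overline{\mcS}_{n-2}$, with the latter implemented precisely by the subtableau map $S\mapsto S[1,3]$ (deletion of the first two columns). Since the unconditional law of $S[1,3]$ given by \eqref{SUBT} has shifted parameters $\hat a=a$, $\hat b=b+2$, the conditioning must be what restores the original parameters, so \eqref{SUBT} cannot be invoked directly; instead the content of the deleted columns must be tracked explicitly.

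First I would unwind the consequences of the conditioning for the first two columns. Placing an $\a$ in $(n-1,1)$ forces, by rule (2) of Definition~\ref{DEFST}, every box strictly above it in column $1$ to be empty; the diagonal box $(n,1)$ below it must carry a symbol, and it cannot be an $\a$ (that would in turn force $(n-1,1)$ to be empty), so $(n,1)=\b$. Turning to column $2$, the diagonal box $(n-1,2)$ must also carry a symbol; were it a $\b$, rule (3) would force the box $(n-1,1)$ to its left to be empty, contradicting $(n-1,1)=\a$. Hence $(n-1,2)=\a$, and rule (2) again empties every box above it in column $2$. Thus the conditioning determines the first two columns completely: two $\a$'s (at $(n-1,1)$ and $(n-1,2)$) and one $\b$ (at $(n,1)$), with all remaining boxes of those columns empty.

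Next I would check that there is no residual interaction between these two columns and the rest of the tableau, so that $S\mapsto S[1,3]$ is a bijection onto $\overline{\mcS}_{n-2}$. All boxes of $S$ outside the first two columns lie in rows $1,\dots,n-2$, where columns $1$ and $2$ are already empty; consequently a $\b$ in a box $(i,j)$ with $j\ge 3$ only ever requires the already-empty boxes $(i,1),(i,2)$ (together with boxes of $S[1,3]$) to its left to be empty, and an $\a$ with $j\ge 3$ only constrains boxes within columns $3,\dots,n$. Hence any $T\in\overline{\mcS}_{n-2}$ extends to a valid $S$ by prepending the two forced columns, and conversely; under this correspondence the diagonal of $T$ maps onto the diagonal boxes of $S$ in rows $1,\dots,n-2$, so all four rules hold. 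The weight then factors as $wt(S)=\a^2\b\, wt(S[1,3])$, the two prepended columns contributing exactly two $\a$'s and one $\b$.

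Finally I would assemble the conditional probability: summing $wt(S)=\a^2\b\,wt(T)$ over $T\in\overline{\mcS}_{n-2}$ gives $\mbP(S_{n,\a,\b}(n-1,1)=\a)=\a^2\b\, Z_{n-2}(\a,\b)/Z_n(\a,\b)$, so for $T$ with extension $S$,
\[
\mbP\big(S_{n,\a,\b}[1,3]=T \,\big|\, S_{n,\a,\b}(n-1,1)=\a\big)=\frac{\a^2\b\, wt(T)/Z_n(\a,\b)}{\a^2\b\, Z_{n-2}(\a,\b)/Z_n(\a,\b)}=\frac{wt(T)}{Z_{n-2}(\a,\b)},
\]
which is exactly $\mbP(S_{n-2,\a,\b}=T)$, the assertion. (As a consistency check, the displayed value of $\mbP(S_{n,\a,\b}(n-1,1)=\a)$ simplifies to $b/\big((n+a+b-1)(n+a+b-2)\big)$, matching \eqref{1BOXA} at $(i,j)=(n-1,1)$.) The main obstacle is the combinatorial forcing of the second step, in particular pinning down $(n,1)=\b$ and $(n-1,2)=\a$ and then verifying that no constraint couples the first two columns to the remainder; once the bijection and the factorization $wt(S)=\a^2\b\,wt(S[1,3])$ are in hand, the cancellation of the constant $\a^2\b$ makes the probabilistic conclusion immediate.
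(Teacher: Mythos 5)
Your proposal is correct and follows essentially the same route as the paper's proof: the conditioning forces the first two columns to contain exactly $\a$ at $(n-1,1)$ and $(n-1,2)$, $\b$ at $(n,1)$, and nothing else, after which $S[1,3]$ ranges over all of $\overline{\mcS}_{n-2}$. The paper states this tersely and concludes immediately, while you additionally spell out the weight factorization $wt(S)=\a^2\b\,wt(S[1,3])$ and the resulting cancellation in the conditional law, which is exactly the implicit content of the paper's final sentence.
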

\begin{proof}
If $S_{n,\a,\b}$ is a staircase tableau such that the box $S_{n, \a,
  \b}(n-1, 1)= \a$, then the box, $S_{n, \alpha, \beta}(n, 1)=
\b$ and $S_{n, \a, \b}(n-1, 2)= \a$
by the rules of a staircase tableau. The first and second column are otherwise empty by those same rules. The remainder, $S_{n, \a, \b} [1, 3]$,  is an arbitrary staircase tableau of size $n-2$. Therefore, the lemma follows.
\end{proof}

\begin{lemma}
\label{BSWCorner} Let $(S_{n,\alpha,\beta})_{i,j}$ be a tableau
$S_{n,\alpha,\beta}$ with the $i$th row and the $j$th column removed. 
If $S_{n, \alpha, \beta}$ is conditioned on $S_{n, \alpha, \beta}(n-1,
1)= \beta$, then the subtableau $(S_{n, \alpha, \beta})_{n-1, 2}
\stackrel{d}{=} \widetilde{S}_{n-1, \alpha, \beta}$ where $\widetilde{S}_{n-1, \alpha, \beta}$
is random tableau of size $n-1$  conditioned on having a $\b$ in the
$(n-1, 1)$ box. In other words
\[\mathcal{L}(S_{n,\a,\b}\ |\
S_{n,\a,\b}(n-1,1)=\b)=\mathcal{L}(S_{n-1,\a,\b}\ |\ S_{n-1,\a,\b}(n-1,1)=\b).\]
\end{lemma}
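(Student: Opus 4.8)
The plan is to show that, once we condition on $S_{n,\a,\b}(n-1,1)=\b$, the entire row $n-1$ and the entire column $2$ become deterministic, so that deleting them produces a weight-preserving bijection onto the size-$(n-1)$ tableaux carrying a $\b$ in their $(n-1,1)$ box, from which the identity of conditional laws follows by cancelling a common weight factor.

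First I would record the cells forced by the conditioning, in the spirit of the proof of Lemma~\ref{SWCorner}. Since $(n-1,2)$ lies on the main diagonal it must carry a symbol, and it cannot be a $\b$: by rule~(3) a $\b$ there would force the box $(n-1,1)$ to its left to be empty, contradicting $(n-1,1)=\b$. Hence $(n-1,2)=\a$, and rule~(2) then empties every box of column~$2$ above it, so $(i,2)$ is empty for $1\le i\le n-2$. The same reasoning in column~$1$ gives $(n,1)=\b$: this diagonal box carries a symbol, and an $\a$ there would (rule~(2)) force the box $(n-1,1)$ above it to be empty. Thus all of row~$n-1$ and all of column~$2$ are determined, with $(n,1)=\b$ in addition. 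The crucial difference from Lemma~\ref{SWCorner} is that here the diagonal box $(n,1)$ is forced to be a $\b$ and is \emph{retained} by the deletion, which is precisely why the image is again a $\b$-conditioned tableau rather than a free one.

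Next I would delete row $n-1$ and column $2$ and relabel the remaining cells, producing a diagram of shape $(n-1,n-2,\dots,1)$; call the result $T:=(S_{n,\a,\b})_{n-1,2}$. The content of this step is to check that $T$ is a legitimate staircase tableau with $T(n-1,1)=\b$. For rule~(4) one notes that the old main-diagonal boxes with $j\ge3$, together with the old box $(n,1)$, relabel precisely onto the new diagonal; in particular the new $(n-1,1)$ box is the old $(n,1)=\b$. Rules~(2) and~(3) persist because deleting a full row and a full column preserves the ``lies above'' and ``lies to the left of'' relations among the surviving boxes and can only shrink the region required to be empty above a given $\a$ or to the left of a given $\b$; since those regions were empty in $S$, they remain empty in $T$. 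Conversely, given any $T\in\overline{\mcS}_{n-1}$ with $T(n-1,1)=\b$, reinserting a new column~$2$ (empty except for an $\a$ in its bottom cell $(n-1,2)$) and a new row~$n-1$ (with $(n-1,1)=\b$ and $(n-1,2)=\a$) reconstructs a unique $S\in\overline{\mcS}_{n}$ with $S(n-1,1)=\b$, inverting the deletion. Hence the map is a bijection between $\{S\in\overline{\mcS}_n:\,S(n-1,1)=\b\}$ and $\{T\in\overline{\mcS}_{n-1}:\,T(n-1,1)=\b\}$.

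Finally I would track the weight. The deletion removes exactly one $\b$ (at $(n-1,1)$) and one $\a$ (at $(n-1,2)$), all other removed cells being empty, so $wt(S)=\a\b\,wt(T)$. Summing over the conditioned set gives a normalization with the same factor $\a\b$, which therefore cancels:
\[
\mbP\big(S_{n,\a,\b}=S\mid S_{n,\a,\b}(n-1,1)=\b\big)=\frac{wt(S)}{\sum_{S'(n-1,1)=\b}wt(S')}=\frac{wt(T)}{\sum_{T'(n-1,1)=\b}wt(T')},
\]
and the right-hand side is exactly $\mbP\big(S_{n-1,\a,\b}=T\mid S_{n-1,\a,\b}(n-1,1)=\b\big)$, which yields the claimed equality of laws. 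The one genuinely delicate point is the middle step: verifying that deleting an interior row and an interior column still yields a valid staircase tableau and that the construction is invertible. The forcing established at the outset is what makes this work, since it guarantees that every deleted cell is already pinned down and carries no freedom.
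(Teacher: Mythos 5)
Your proposal is correct and follows essentially the same route as the paper's proof: conditioning forces $(n-1,2)=\a$, empties the rest of column~$2$, and forces $(n,1)=\b$, after which deleting row $n-1$ and column $2$ leaves an arbitrary size-$(n-1)$ tableau conditioned to have a $\b$ in its $(n-1,1)$ box. The only difference is that you spell out the bijection, the verification of the staircase rules after deletion, and the cancellation of the weight factor $\a\b$, all of which the paper leaves implicit in the phrase ``the remainder is an arbitrary staircase tableau of size $n-1$ conditioned to have a $\b$ in box $(n-1,1)$.''
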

\begin{proof}
If $S_{n,\a,\b}$ is a staircase tableau such that the box $S_{n, \alpha,
  \beta}(n-1, 1)= \beta$, then the box $S_{n, \alpha, \beta}(n-1, 2)=
\alpha$ and $S_{n, \alpha, \beta}(n-1,1) = \beta$ by the rules of a
staircase tableau. The second column is otherwise empty by those same
rules. The $n$th row  only has one box, $(n,1)$, which must be a
$\beta$. The remainder is an arbitrary staircase tableau of size $n-1$
conditioned to have a $\beta$ in box $(n-1,1)$. Therefore, the lemma follows.
\end{proof}

\section{Distribution of parameters along the second main diagonal} \label{SAB}
The asymptotic distribution of parameters along the main diagonal is
known. The number of $\alpha/\gamma$ symbols and the number of
$\beta/\delta$ symbols along the main diagonal were proven to be
asymptotically normal in \cite{D-HH}, and the distribution of boxes
along the main diagonal was given in \cite{HJ}. However, the
distributions of parameters on the other diagonals have not been
studied specifically. The expected values were computed in \cite{HJ} and
it was conjectured there that the asymptotic distribution for the
symbols on the $k$th diagonal is Poisson as $k=k_n$ goes to infinity
with $n$ going to infinity. As the first step towards proving that
conjecture we now present results concerning the second main diagonal. In order to simplify notation, let $S_{n, \alpha, \beta}(i)$  be the symbol contained in second main diagonal box $(n-i,i)$ of $S_{n, \alpha, \beta}$. As our first result, the following is the distribution of boxes along the second main diagonal. 

\begin{theorem}
\label{rSymbols}
Let $ 1 \leq j_{1} < ... < j_{r} \leq n - 1$. If 
\begin{equation}\label{2_diff}j_{k} \leq j_{k + 1} - 2, \hspace{2mm} \forall k = 1, 2, ..., r -
1\end{equation} 
then 
\begin{eqnarray*}&&\mathbb{P} (S_{n, \alpha, \beta}(j_{1}) = ... = S_{n, \alpha, \beta}(j_{r}) = \alpha) \\&&\qquad= \prod_{k=1}^{r}\frac{b + j_{r - k + 1} - 2r + 2k - 1}{(a + b + n - 2r + 2k -1)(a + b + n - 2r + 2k - 2)}.\end{eqnarray*}
\noindent (For $r=1$, this is (\ref{1BOXA})). Otherwise, 
\[\mathbb{P} (S_{n, \alpha, \beta}(j_{1}) = ... = S_{n, \a, \b}(j_{r}) = \alpha) = 0.\] 
\end{theorem}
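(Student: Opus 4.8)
The plan is to prove the vanishing case by a direct structural argument and the product formula by induction on the number $r$ of prescribed boxes, peeling off one $\alpha$ at a time by combining the subtableau identity $(\ref{SUBT})$ with Lemma~\ref{SWCorner}. The base case $r=1$ is exactly $(\ref{1BOXA})$ specialized to the second-diagonal box $(n-j_1,j_1)$ (where $i+j=n$), and one checks that it coincides with the $r=1$ instance of the product, as the statement already notes. Throughout I abbreviate $S(j_\ell):=S_{n,\alpha,\beta}(j_\ell)$.

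For the vanishing case, suppose $(\ref{2_diff})$ fails. Since the indices are strictly increasing this forces $j_{k+1}=j_k+1$ for some $k$, the only way it can fail. The two boxes are then $P=(n-j_k,j_k)$ and $Q=(n-j_k-1,j_k+1)$, and I look at $M=(n-j_k,j_k+1)$, which lies on the main diagonal since $(n-j_k)+(j_k+1)=n+1$, hence carries a symbol. If $M=\alpha$ then every box above $M$ in its column is empty, contradicting $Q=\alpha$; if $M=\beta$ then every box to the left of $M$ in its row is empty, contradicting $P=\alpha$. So $P$ and $Q$ cannot both be $\alpha$ and the probability is $0$.

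For the inductive step I condition on the lowest prescribed box, writing $\mathbb{P}(S(j_1)=\dots=S(j_r)=\alpha)=\mathbb{P}(S(j_1)=\alpha)\cdot\mathbb{P}(S(j_2)=\dots=S(j_r)=\alpha \mid S(j_1)=\alpha)$. To evaluate the conditional factor I first apply $(\ref{SUBT})$ with $i=1,\ j=j_1$: the whole event depends only on the subtableau $S_{n,\alpha,\beta}[1,j_1]$ (all of $j_1,\dots,j_r$ lie in columns $\ge j_1$), so the problem is replaced by the analogous one on $S_{N,\hat a,\hat b}$ with $N=n-j_1+1$, $\hat a=a$, $\hat b=b+j_1-1$, in which the lowest box now sits at index $1$, i.e.\ in the box $(N-1,1)$. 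Conditioning on that box being $\alpha$ and applying Lemma~\ref{SWCorner} makes the subtableau obtained by deleting the first two columns distributed as $S_{N-2,\hat a,\hat b}$, in which $S(j_2),\dots,S(j_r)$ become second-diagonal boxes at indices $j_\ell-j_1-1$; here $(\ref{2_diff})$ guarantees $j_\ell\ge j_1+2$, so these boxes survive the deletion and retain their mutual gaps $\ge 2$. The inductive hypothesis then evaluates the conditional factor as a product over $\ell=2,\dots,r$.

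The rest is bookkeeping: substituting $N-2=n-j_1-1$, $\hat b=b+j_1-1$, and the shifted indices $j_\ell-j_1-1$ into the induction hypothesis, the $\ell$-th factor collapses to $\frac{b+j_\ell+1-2\ell}{(a+b+n+1-2\ell)(a+b+n-2\ell)}$ (the key identity being $\hat a+\hat b+(N-2)=a+b+n-2$, which is what makes the denominators line up), while $\mathbb{P}(S(j_1)=\alpha)$ from $(\ref{1BOXA})$ supplies the missing $\ell=1$ factor. Multiplying reproduces the claimed product after the reindexing $m\mapsto r-m+1$ that matches this $\ell$-indexed form to the statement. I expect the real content, and the step most needing care, to be the conditional distributional identity of the previous paragraph, namely that conditioning on $S(j_1)=\alpha$ and passing to the indicated subtableau genuinely yields $S_{n-j_1-1,\hat a,\hat b}$ with the remaining prescribed boxes in the right positions; once that is in place the computation is just the displayed algebra.
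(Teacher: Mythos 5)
Your proposal is correct and follows essentially the same route as the paper's own proof: induction on $r$, reduction via the subtableau identity (\ref{SUBT}) to place the lowest prescribed box at index $1$, Lemma~\ref{SWCorner} to handle the conditioning, the base case (\ref{1BOXA}), and the same main-diagonal obstruction argument (the forced empty diagonal box $(n-j_k,\,j_k+1)$) for the vanishing case. The only cosmetic difference is that you factor the probability before passing to the subtableau while the paper transfers the whole event first, and your algebraic bookkeeping via the identity $\hat a+\hat b+(N-2)=a+b+n-2$ matches the paper's final simplification.
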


\begin{proof} First note that when (\ref{2_diff}) fails there
  exists $j_{k}$ such that $j_{k} = j_{k+1} - 1$ and thus there must be two $\alpha$'s  in boxes side by side on the $(n-i, i)$ 
diagonal. But this is impossible by the rules of a staircase
tableau as  no symbol  can be put in the diagonal box
$(n-j_k,j_{k+1})$ adjacent to these two boxes. Therefore the
probability is $0$. 

Suppose now that (\ref{2_diff}) holds. We proceed by induction on $r$. By (\ref{SUBT}), $S_{n, \alpha, \beta}[1, j_{1}] \stackrel{d}{=} S_{n -
  j_{1} + 1, \alpha, \hat\beta}$ with $\hat \beta^{-1} = \beta^{-1} + j_{1} - 1$ which yields:
\begin{eqnarray*} &&
\mathbb{P}(S_{n, \alpha, \hat\beta}(j_{1}) = ... = S_{n, \alpha,
  \hat\beta}(j_{r}) = \alpha) 
=\mathbb{P}(S_{n - j_{1} + 1, \alpha, \hat\beta}(1) = ... = S_{n - j_{1} + 1, \alpha, \hat\beta}(j_{r} - j_{1} + 1) = \alpha) \\&&\quad=
\mathbb{P}(S_{n - j_{1} + 1, \alpha, \hat\beta}(j_{2} - j_{1} + 1) = ... = S_{n - j_{1} + 1, \alpha, \hat\beta}(j_{r} - j_{1} + 1) = \alpha \hspace{1mm} | \hspace{1mm} S_{n - j_{1} + 1, \alpha, \hat\beta}(1) = \alpha) \\ & & \qquad \cdot
\mathbb{P}(S_{n - j_{1} + 1, \alpha, \hat\beta}(1) = \alpha).
\end{eqnarray*}
By Lemma \ref{SWCorner} and the induction hypothesis:
\begin{eqnarray*} &&
\mathbb{P}(S_{n - j_{1} + 1, \alpha, \hat\beta}(j_{2} - j_{1} + 1) = ... = S_{n - j_{1} + 1, \alpha, \hat\beta}(j_{r} - j_{1} + 1) = \alpha \hspace{1mm} | \hspace{1mm} S_{n - j_{1} + 1, \alpha, \hat\beta}(1) = \alpha) \\&&\quad=
\mathbb{P} (S_{n - j_{1} - 1, \alpha, \hat\beta}(j_{2} - j_{1} - 1) = ... = S_{n - j_{1} - 1, \alpha, \hat\beta}(j_{r} - j_{1} - 1) = \alpha) \\&&\quad=
\prod_{k=1}^{r-1} \frac{\hat{b} + j_{r-k+1} - j_{1} - 2r + 2k}{(a + \hat{b} + n - j_{1} - 2r + 2k)(a + \hat{b} + n - j_{1} - 2r + 2k - 1)}.
\end{eqnarray*}
By (\ref{1BOXA}):
\[
\mathbb{P}(S_{n - j_{1} + 1, \alpha, \hat\beta}(1) = \alpha) = \frac{\hat{b}}{(n - j_{1} + a + \hat{b})(n - j_{1} + a + \hat{b} - 1)}.
\]
Therefore,
\begin{eqnarray*} &&
\mathbb{P}(S_{n, \alpha, \hat\beta}(j_{1}) = ... = S_{n, \alpha, \hat\beta}(j_{r}) = \alpha) \\&&\quad=
\prod_{k=1}^{r} \frac{\hat{b} + j_{r-k+1} - j_{1} - 2r + 2k}{(a + \hat{b} + n - j_{1} - 2r + 2k)(a + \hat{b} + n - j_{1} - 2r + 2k - 1)} \\&&\quad=
\prod_{k=1}^{r} \frac{b + j_{r-k+1} - 2r + 2k - 1}{(a + b + n - 2r + 2k - 1)(a + b + n - 2r + 2k - 2)}
\end{eqnarray*}
which proves the result.
\end{proof}

Our second main result is the distribution of the number of $\alpha$'s
(and $\beta$'s) along the second main diagonal. The proof requires a  lemma.
\begin{lemma}
\label{LA}
Let 
\[J_{r,m}:=\{ 1 \leq j_{1} < ... < j_{r} \leq m:\ j_{k} \leq j_{k + 1}
- 2, \hspace{2mm} \forall k = 1, 2, ..., r - 1\}.\] 
Then 
\[\sum_{J_{r, m}} \left(\prod_{k=1}^{r} j_{r-k+1}\right)
=\frac{(m+1)_{2r}}{2^{r}r!},\]
where $(x)_r=x(x-1)\dots(x-(r-1))$ is the falling factorial.
\end{lemma}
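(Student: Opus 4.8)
The plan is to prove the identity by induction, viewing the left-hand side as a doubly-indexed quantity. First I would observe that relabeling the factors leaves a product unchanged, so $\prod_{k=1}^r j_{r-k+1} = \prod_{k=1}^r j_k$, and it suffices to analyze
\[
S_r(m) := \sum_{J_{r,m}} \prod_{k=1}^r j_k
\]
and show $S_r(m) = \frac{(m+1)_{2r}}{2^r r!}$.

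The key structural step is a recursion obtained by conditioning on the largest index $j_r$. Partition $J_{r,m}$ according to whether $j_r \le m-1$ or $j_r = m$. In the first case the tuple lies in $J_{r,m-1}$; in the second case the gap condition forces $j_{r-1} \le m-2$, so $(j_1,\dots,j_{r-1})$ ranges over $J_{r-1,m-2}$ and the product factors as $m\prod_{k=1}^{r-1} j_k$. This yields
\[
S_r(m) = S_r(m-1) + m\,S_{r-1}(m-2).
\]
I would also record the boundary data: $S_0(m) = 1$ (empty product), and $S_r(m) = 0$ whenever $m \le 2r-2$, since there is then no room to place $r$ indices with pairwise gaps at least $2$.

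It then remains to verify that the candidate $f(m,r) := \frac{(m+1)_{2r}}{2^r r!}$ satisfies the same recursion and boundary conditions, after which a joint induction (say on $m$ with $r$ secondary) completes the argument. Here $f(m,r) = 0$ for $m \le 2r-2$ because one of the consecutive falling factors of $(m+1)_{2r}$ vanishes, and $f(m,0) = 1$ matches $S_0(m) = 1$ trivially. For the recursion, the heart of the computation is the falling-factorial identity $(m+1)_{2r} - (m)_{2r} = 2r\,(m)_{2r-1}$, obtained by factoring out the common run of $2r-1$ consecutive falling factors, together with $m\,(m-1)_{2r-2} = (m)_{2r-1}$; substituting these shows that both $f(m,r) - f(m-1,r)$ and $m\,f(m-2,r-1)$ equal $\frac{(m)_{2r-1}}{2^{r-1}(r-1)!}$, which is exactly the recursion. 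The main obstacle is purely bookkeeping: aligning the shifted arguments ($m-1$ and $m-2$, and the index shift $r \to r-1$) in the recursion and then matching them against the falling-factorial algebra. Once the recursion is correctly set up there is no conceptual difficulty.
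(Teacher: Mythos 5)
Your proof is correct, but it takes a genuinely different route from the paper's. Both arguments exploit the gap condition at the largest index $j_r$, but the paper runs a single induction on $r$: it writes the sum as $\sum_{j_r} j_r \sum_{J_{r-1,\,j_r-2}} \prod_k j_k$, applies the induction hypothesis to the inner sum to reduce everything to $\frac{1}{2^{r-1}(r-1)!}\sum_{j_r=0}^{m} (j_r)_{2r-1}$, and then evaluates that sum in closed form using the binomial (hockey-stick) identity $\sum_{j=0}^m \binom{j}{t} = \binom{m+1}{t+1}$, equivalently $\sum_{j=0}^m (j)_t = (m+1)_{t+1}/(t+1)$. You instead peel off only the case $j_r = m$, obtaining the two-variable recursion $S_r(m) = S_r(m-1) + m\,S_{r-1}(m-2)$ (which is just the incremental, un-telescoped form of the paper's decomposition), and then \emph{verify} that the candidate $(m+1)_{2r}/(2^r r!)$ satisfies the same recursion and boundary data, the summation identity being replaced by the one-line factoring identity $(m+1)_{2r} - (m)_{2r} = 2r\,(m)_{2r-1}$ together with $m\,(m-1)_{2r-2} = (m)_{2r-1}$; I checked this algebra and it is right. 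What your route buys: it is self-contained (no appeal to binomial-coefficient identities) and the key algebraic step is trivial. What it costs: it is verification rather than computation, so it only works because the closed form is handed to you, and it requires more careful bookkeeping at degenerate arguments --- e.g.\ $S_{r-1}(m-2)$ for small $m$ --- where one must check that the conventions $S_0 \equiv 1$ and $S_r(m) = 0$ for $m \le 2r-2$ match the vanishing of $(m+1)_{2r}$; your stated boundary conditions do cover this, so the outline is complete.
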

\begin{proof}
By induction on $r$.
When $r=1$:
\[\sum_{J_{1,m}} \left( \prod_{k=1}^{1} j_{1-k+1} \right) = \sum_{j_1=1}^m j_{1} = \frac{(m+1)m}{2}.
\]
Assume the statement holds for $r-1$. Then:
\begin{eqnarray*}
\sum_{J_{r,m}} \left(\prod_{k=1}^{r} j_{r-k+1} \right) &=&
\sum_{j_r=2r-1}^{m} j_r\left( \sum_{J_{r-1,j_r-2}}  \prod_{k=2}^{r} j_{r-k+1} \right) \\&=&
\sum_{j_r=2r-1}^{m} j_{r}
\frac{(j_r-1)_{2(r-1)}}{2^{r-1}(r-1)!}\\&=&\frac1{2^{r-1}(r-1)!}\sum_{j_{r}=2r-1}^m
( j_{r})_{2r-1}
\end{eqnarray*}
where the second equality is by the induction hypothesis.
Since
\[\sum_{j_r=2r-1}^{m} (j_{r})_{2r-1}=\sum_{j_{r}=0}^m (
j_{r})_{2r-1}\]
the lemma will be proved once we verify that 
\[\sum_{j=0}^m(j)_t
=\frac{(m+1)_{t+1}}{t+1},\]
for any non-negative integer $t$ (and apply it with $t=2r-1$). 
Using the identity 
\[\sum_{j=0}^m{j\choose t}={m+1\choose t+1}\]
(see, e.g. \cite[Formula~(5.10)]{GKP})   we see that 
\[\sum_{j=0}^m(j)_t=
\sum_{j=0}^{m} \frac{j!}{(j-t)!}=t!\sum_{j=0}^m{j\choose t}
=t!{m+1\choose  t+1}
=t!\frac{(m+1)_{t+1}}{(t+1)!}=\frac{(m+1)_{t+1}}{m+1},\]
as asserted.
 \end{proof}

Finally, define $A_{n}$ and $B_{n}$ to be the number of $\alpha$'s and
$\beta$'s on the second main diagonal, 
i.e. 
$A_{n} := \sum^{n-1}_{j=1} I_{S_{n,\alpha, \beta}(j) = \alpha}$ and
$B_{n} := \sum^{n-1}_{j=1} I_{S_{n,\alpha, \beta}(j) = \beta}$. Then,
the asymptotic distribution of $A_n$ and $B_n$ is given in the following theorem and corollary.
\begin{theorem}
\label{DT}
Let $Pois(\lambda)$ be a Poisson random variable with parameter
$\lambda$. Then, as $n\to\infty$,
\begin{equation}
A_{n} \stackrel{d}{\rightarrow} Pois \left( \frac{1}{2} \right).
\end{equation}
\end{theorem}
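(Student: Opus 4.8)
My plan is to establish convergence to $Pois(1/2)$ via the method of factorial moments: a sequence of nonnegative integer-valued random variables converges in distribution to $Pois(\lambda)$ if and only if for every fixed $r\ge 1$ the $r$th factorial moment $\mathbb{E}[(A_n)_r]=\mathbb{E}[A_n(A_n-1)\cdots(A_n-r+1)]$ converges to $\lambda^r$. Here I will show it converges to $(1/2)^r$, which identifies the limit as $Pois(1/2)$. The factorial moment is the natural tool because $A_n$ is a sum of indicators, and $(A_n)_r$ equals $r!$ times the number of ordered $r$-subsets of distinct diagonal boxes all carrying an $\alpha$, whose expectation is a sum over $r$-element index sets of the joint probabilities computed in Theorem~\ref{rSymbols}.

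The key steps, in order, are as follows. First I would write
\[
\mathbb{E}[(A_n)_r]=\sum_{1\le j_1<\dots<j_r\le n-1} r!\,\mathbb{P}\bigl(S_{n,\a,\b}(j_1)=\dots=S_{n,\a,\b}(j_r)=\a\bigr),
\]
and observe that by Theorem~\ref{rSymbols} every term with a violation of the spacing condition (\ref{2_diff}) vanishes, so the sum is effectively over the set $J_{r,n-1}$ from Lemma~\ref{LA}. Second, I would substitute the explicit product formula from Theorem~\ref{rSymbols} for the surviving joint probability. The denominators $\prod_{k=1}^r (a+b+n-2r+2k-1)(a+b+n-2r+2k-2)$ do not depend on the $j$'s, so they factor out of the sum; they form a product of $2r$ consecutive-type linear factors in $n$, hence behave like $n^{2r}$ as $n\to\infty$. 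Third, for the numerator $\prod_{k=1}^r (b+j_{r-k+1}-2r+2k-1)$, I would argue that only the leading term in each $j$ matters asymptotically: after summing over $J_{r,n-1}$, Lemma~\ref{LA} (applied with $m=n-1$) gives $\sum_{J_{r,n-1}}\prod_k j_{r-k+1}=\frac{(n)_{2r}}{2^r r!}\sim \frac{n^{2r}}{2^r r!}$, and the lower-order constants $b-2r+2k-1$ contribute only to lower-order terms in the sum.

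Assembling these pieces, the $n^{2r}$ from the numerator sum cancels the $n^{2r}$ from the denominators, the $r!$ in front cancels the $r!$ in the denominator of Lemma~\ref{LA}, and what survives is $2^{-r}$, giving $\mathbb{E}[(A_n)_r]\to (1/2)^r$ as desired. I expect the main obstacle to be the third step: making rigorous that replacing each factor $b+j_{r-k+1}-2r+2k-1$ by $j_{r-k+1}$ changes the sum only by a lower-order amount. The cleanest route is to bound the full numerator product above and below by constant multiples of $\prod_k j_{r-k+1}$ for large $n$ (since each $j$ in a nonvanishing term is of order $n$ once we restrict attention to the dominant part of the index set), or alternatively to expand the numerator product into a sum of terms, each a product of a subset of the $j$'s times constants, and apply a version of Lemma~\ref{LA} to each; every term with fewer than $r$ factors of $j$ sums to $O(n^{2r-1})$ and is therefore negligible against the $n^{2r}$ denominator. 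Either way the estimate is routine, and once it is in place the limit $(1/2)^r$ follows and the theorem is proved.
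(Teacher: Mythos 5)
Your proposal is correct and follows essentially the same route as the paper: the factorial moment method via \cite[Theorem~20]{B}, reduction of the sum to $J_{r,n-1}$ using Theorem~\ref{rSymbols}, and Lemma~\ref{LA} to evaluate $\sum_{J_{r,n-1}}\prod_k j_{r-k+1}=(n)_{2r}/(2^r r!)$, with numerator/denominator asymptotics cancelling to give $(1/2)^r$. In fact your treatment of the approximation step (expanding the numerator product and showing the sub-leading terms sum to $O(n^{2r-1})$) is more explicit than the paper, which handles that step with an unjustified ``$\approx$''.
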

\begin{proof}
By \cite[Theorem~20, Chapter~1]{B} it suffices to show that the $r$th
factorial moment of 
$A_{n}$ 
satisfies:
\begin{equation}
\mathbb{E}(A_{n})_{r}\to\left(\frac12\right)^r\hspace{5mm}\mbox{as\ }\n\to\infty.
\end{equation}
For the ease of notation let $I_j:=I_{S_{n,\a,\b}(j)=\a}$ and consider 
\begin{eqnarray*} 
z^{A_{n}} &=& z^{\sum^{n-1}_{j=1} I_j} = \prod^{n-1}_{j=1} z^{I_j} = \prod^{n-1}_{j=1} (1 + (z-1))^{I_ j} = \prod^{n-1}_{j=1}(1 + I_j(z-1)) \\&=&
1 + \sum^{n-1}_{r=1} \left( \sum_{1 \leq j_{1} < ... < j_{r} \leq n - 1} \left( \prod^{r}_{k=1} I_{j_k} \right) \right) (z - 1)^{r} \\&=& 1 + \sum^{n-1}_{r=1} (z - 1)^{r} \left( \sum_{1 \leq j_{1} < ... < j_{r} \leq n - 1} \left( \prod^{r}_{k=1} I_{j_{k}} \right) \right).
\end{eqnarray*}
Thus, 
\[
\mathbb{E}(z^{A_{n}}) = 1 + \sum^{n-1}_{r=1} (z - 1)^{r} \left( \sum_{1 \leq
  j_{1} < ... < j_{r} \leq n - 1} %1 \cdot 
\mathbb{P}(I_{j_{1}} \cap \ldots \cap
I_{j_{r}}) \right). 
\]
Hence
\[\mathbb{E}(A_{n})_{r} =
\frac{d^r}{dz^r}(\mathbb{E}z^{A_{n}}) |_{z=1} =
r! \left( \sum_{1 \leq j_{1} < ... < j_{r} \leq n - 1} \mathbb{P}(I_{j_{1}} \cap \ldots \cap I_{j_{r}}) \right). 
\]
By Theorem~\ref{rSymbols} and Lemma~\ref{LA}
\begin{eqnarray*} 
\mathbb{E}(A_{n})_{r}& =& r! 
\sum_{J_{r,n-1}} \left( \prod_{k=1}^{r} \frac{b + j_{r-k+1} - 2r + 2k - 1}{(a + b + n - 2r + 2k - 1)(a + b + n - 2r + 2k - 2)} \right) \\ &\approx&
r!\sum_{J_{r, n - 1}} \left( \prod_{k=1}^{r} \frac{j_{r-k+1}}{n^{2}}
\right) = \frac{r!}{n^{2r}} \frac{(n)_{2r}}{2^{r}r!} \rightarrow
\left(
 \frac{1}{2} \right)^{r},\quad\mbox{as\ }n \rightarrow \infty. 
\end{eqnarray*}
\end{proof} 
\begin{corollary}
\label{BC}
The $r$th factorial moment of the number $B_{n}$ of $\b$'s on the second main diagonal of a random staircase tableau of size $n$ satisfies:
\begin{equation}
\mathbb{E}(B_{n})_{r}\to\left(\frac12\right)^r\quad\mbox{as\ } n\to\infty.
\end{equation}
Furthermore,
\begin{equation}
B_{n} \stackrel{d}{\rightarrow} Pois \left( \frac{1}{2} \right)\quad\mbox{as\ } n\to\infty.
\end{equation}\end{corollary}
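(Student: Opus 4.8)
Corollary~\ref{BC} asserts two things about $B_n$, the number of $\beta$'s on the second main diagonal: first, that its $r$th factorial moment converges to $(1/2)^r$; and second, that $B_n \stackrel{d}{\to} Pois(1/2)$. I have available Theorem~\ref{DT} (the analogous result for $A_n$), the symmetry involution on staircase tableaux, and the factorial-moment criterion from \cite[Theorem~20, Chapter~1]{B}.

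**The plan.** The cleanest route is to exploit the row/column involution described after \eqref{Z4}, which interchanges $\alpha$'s and $\beta$'s while swapping the roles of $a=\alpha^{-1}$ and $b=\beta^{-1}$. The plan is to first observe that this involution, applied to $S_{n,\alpha,\beta}$, maps $\beta$'s on the second main diagonal to $\alpha$'s on the second main diagonal of the image tableau, which is distributed as $S_{n,\beta,\alpha}$. Consequently $B_n$ computed for $S_{n,\alpha,\beta}$ has the same distribution as $A_n$ computed for $S_{n,\beta,\alpha}$, i.e. with $a$ and $b$ interchanged. Since Theorem~\ref{DT} holds for all fixed positive parameters and the limiting value $(1/2)^r$ does not depend on $a$ or $b$ at all, the factorial-moment conclusion transfers immediately. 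The second main diagonal is symmetric under transposition (the box $(n-i,i)$ maps to $(i,n-i)$, which is again a second-main-diagonal box), so no index bookkeeping spoils this correspondence.

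**Carrying it out.** First I would state precisely that $B_n \stackrel{d}{=} A_n^{(a\leftrightarrow b)}$, where the superscript records that the roles of $a$ and $b$ are swapped; this follows from the distributional invariance of $S_{n,\alpha,\beta}$ under the involution together with the fact that the involution preserves the second main diagonal setwise. Then, since Theorem~\ref{DT}'s computation $\mathbb{E}(A_n)_r \to (1/2)^r$ goes through identically with $a$ and $b$ exchanged (inspecting the proof, the dominant term $\prod_k j_{r-k+1}/n^2$ is independent of which of $a,b$ sits in the numerator, and Lemma~\ref{LA} supplies the same sum), we conclude $\mathbb{E}(B_n)_r \to (1/2)^r$. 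Finally, invoking the same criterion \cite[Theorem~20, Chapter~1]{B} that convergence of all factorial moments to $(1/2)^r$ forces $B_n \stackrel{d}{\to} Pois(1/2)$, both assertions follow.

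**Main obstacle.** The only delicate point is verifying that the involution genuinely sends $\beta$'s on the second main diagonal to $\alpha$'s on the second main diagonal, rather than scrambling them onto some other set of boxes; this is where I must be careful, since the involution transposes rows and columns. I expect this to be routine once one checks that transposition fixes the second main diagonal as a set and that the induced measure is exactly $\mathcal{L}(S_{n,\beta,\alpha})$. Alternatively, if one prefers to avoid any appeal to the involution's action on individual boxes, one can prove the factorial-moment statement directly by repeating the argument of Theorem~\ref{DT} using \eqref{1BOXB} in place of \eqref{1BOXA}, which again yields the leading term $j_{r-k+1}/n^2$ and the same limit; this direct verification is the safer fallback.
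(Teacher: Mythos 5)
Your proposal is correct and takes essentially the same approach as the paper: the paper's entire proof is the one-line appeal ``This follows by symmetry,'' invoking the row/column involution that swaps $\a$'s and $\b$'s, exactly as you do. Your additional checks --- that transposition maps the box $(n-i,i)$ to $(i,n-i)$ and hence fixes the second main diagonal setwise, and that the limit $(1/2)^r$ is independent of $a$ and $b$ so the parameter swap is harmless --- are precisely the details the paper leaves implicit.
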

\begin{proof}
This follows by symmetry, see Section~\ref{DEFST}.
\end{proof}
\begin{remark}
Theorem~\ref{DT} and Corollary~\ref{BC} hold regardless of the values
of $\a$ and $\b$ including the cases discussed earlier when
$\a=\infty$, $\b=\infty$, or $\a=\b=\infty$. As noted in
\cite[Examples~3.6 and 3.7]{HJ} these cases correspond to staircase
tableaux with the maximal number of $\a$'s (or $\b$'s) and the maximal
number of symbols, respectively. 
\end{remark}

\section{Distribution of Non-Empty Boxes}
The number of $\a$'s and the number of $\b$'s, $N_{a}$ and $N_{b}$,
are not independent random variables, and the second main diagonal may
have empty boxes. Therefore, in order to completely describe the
second main diagonal, we must consider both symbols collectively. First, we present the distribution of non-empty boxes along the second main diagonal.
\begin{theorem}
\label{rNZero}
Let $ 1 \leq j_{1} < ... < j_{r} \leq n - 1$. If (\ref{2_diff}) holds 
then 
\[\mathbb{P} (S_{n, \a, \b}(j_{1}) \neq 0, ... , S_{n, \a, \b}(j_{r}) \neq 0) = \prod_{k=1}^{r}\frac{1}{(n + a + b - r + k - 1)}.\]
(For $r=1$, this is obtained by adding (\ref{1BOXA}) and (\ref{1BOXB})).  
Otherwise, 
\[\mathbb{P} (S_{n, \a, \b}(j_{1}) \neq 0, ... , S_{n, \a, \b}(j_{r}) \neq 0) = 0.\] 
\end{theorem}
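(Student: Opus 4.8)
The plan is to follow the architecture of the proof of Theorem~\ref{rSymbols}: dispose of the degenerate case by the same adjacency obstruction, settle $r=1$ by adding the two single-box formulas, and then induct on $r$ after using (\ref{SUBT}) to move the first box into the corner. When (\ref{2_diff}) fails, two requested boxes occupy $(n-j_k,j_k)$ and $(n-j_k-1,j_k+1)$, and the main-diagonal box $(n-j_k,j_k+1)$ between them admits no symbol once both neighbours are non-empty (an $\a$ there would force the box above it empty, a $\b$ the box to its left), so the probability is $0$ exactly as before. For $r=1$ a second-diagonal box satisfies $i+j=n$, so (\ref{1BOXA}) and (\ref{1BOXB}) add up to $\frac{(j-1+b)+(n-j-1+a)}{(n+a+b-1)(n+a+b-2)}=\frac1{n+a+b-1}$, the claimed value.

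For the inductive step I would set $T:=n+a+b$ and apply (\ref{SUBT}) with $(i,j)=(1,j_1)$ to pass from $S_{n,\a,\b}$ to $S_{N,\a,\hat\b}$, where $N=n-j_1+1$, $\hat b=b+j_1-1$ and still $N+a+\hat b=T$; the requested boxes move to second-diagonal positions $1<j_2-j_1+1<\dots<j_r-j_1+1$, so the first of them is the corner box $(N-1,1)$, which lies on the second diagonal of $S_N$. Conditioning on that box gives, with $E$ the event that all $r$ requested boxes are non-empty,
\[
\mathbb{P}(E)=\mathbb{P}(\mathrm{corner}=\a)\,q_\a+\mathbb{P}(\mathrm{corner}=\b)\,q_\b,
\]
where $q_\a,q_\b$ are the conditional probabilities that the remaining $r-1$ boxes are non-empty and the two weights are read off from (\ref{1BOXA}) and (\ref{1BOXB}) as $\frac{\hat b}{(T-1)(T-2)}$ and $\frac{N+a-2}{(T-1)(T-2)}$. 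By Lemma~\ref{SWCorner} the event $\{\mathrm{corner}=\a\}$ collapses $q_\a$ to the unconditioned $(r-1)$-box probability in $S_{N-2,\a,\hat\b}$, which the induction hypothesis evaluates as $\Gamma(T-r-1)/\Gamma(T-2)$.

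The main obstacle is $q_\b$. By Lemma~\ref{BSWCorner} the event $\{\mathrm{corner}=\b\}$ does not return an unconditioned tableau but $\widetilde S_{N-1,\a,\hat\b}$, i.e.\ a size-$(N-1)$ tableau conditioned to carry a $\b$ in its \emph{main-diagonal} corner $(N-1,1)$, and this conditioning genuinely reweights the whole tableau, so it cannot be stripped off the way the $\a$-case was. To handle it I would first isolate the one clean decoupling that is available: conditioning on an $\a$ in the main-diagonal SW corner forces the first column above it to be empty, and removing that column is a weight-preserving bijection showing that the remainder is distributed exactly as $S_{M-1,\a,\b}$ (the weight of the forced $\a$ cancels against $\mathbb{P}(\mathrm{corner}=\a)$); I would record this as a short lemma. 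Then I would evaluate $q_\b$ by complementing on the corner of $S_{N-1,\a,\hat\b}$,
\[
\mathbb{P}(\mathrm{corner}=\b,\ \mathrm{rest}\neq0)=\mathbb{P}(\mathrm{rest}\neq0)-\mathbb{P}(\mathrm{corner}=\a,\ \mathrm{rest}\neq0),
\]
in which the first term is the unconditioned $(r-1)$-box probability $\Gamma(T-r)/\Gamma(T-1)$ (induction hypothesis) and the second equals $\mathbb{P}(\mathrm{corner}=\a)$ times an unconditioned $(r-1)$-box probability in $S_{N-2,\a,\hat\b}$ by the decoupling lemma (again the hypothesis); here $\mathbb{P}(\mathrm{corner}=\b)=\frac{N+a-2}{T-2}$ comes from (\ref{1BOXD}) since in $S_{N-1}$ this corner is on the main diagonal. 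Dividing by $\mathbb{P}(\mathrm{corner}=\b)$ puts $q_\b$ in closed form.

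Finally I would substitute the two closed forms into the displayed decomposition. Both contributions reduce to a multiple of $\Gamma(T-r-1)/\Gamma(T)$, and they combine through the identity $\hat b+(N+a-r-1)=T-r-1$ into $(T-r-1)\Gamma(T-r-1)/\Gamma(T)=\Gamma(T-r)/\Gamma(T)=\prod_{k=1}^{r}(T-k)^{-1}$, which is precisely $\prod_{k=1}^{r}\frac1{n+a+b-r+k-1}$. The only genuine work is tracking indices and parameters through the two reductions; the fact that the answer depends on the $j_k$ only through $r$ is a convenient consistency check to run at each stage.
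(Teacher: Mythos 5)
Your proposal is correct and follows essentially the same route as the paper's proof: the same reduction via (\ref{SUBT}) to make $j_1$ the corner box, the same two-case conditioning resolved by Lemmas~\ref{SWCorner} and \ref{BSWCorner}, and the same complementation trick $\mathbb{P}(\mathrm{corner}=\b,\,\mathrm{rest}\neq0)=\mathbb{P}(\mathrm{rest}\neq0)-\mathbb{P}(\mathrm{corner}=\a,\,\mathrm{rest}\neq0)$ for the $\b$ case, with your algebra (organized via $T=n+a+b$ and Gamma functions) checking out. The only cosmetic difference is that the decoupling lemma you propose to prove yourself (removing the first column after conditioning on an $\a$ in the main-diagonal corner) is exactly \cite[Lemma~7.5]{HJ}, which the paper simply cites.
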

\begin{proof}
Suppose (\ref{2_diff}) holds. We proceed by induction on $r$.

By (\ref{SUBT}), $S_{n, \a, \b}[1, j_{1}] \stackrel{d}{=} S_{n - j_{1} + 1, \a, \hb}$ with $\hb^{-1} = \b^{-1} + j_{1} - 1$ which yields:
\begin{eqnarray*} &&
\mathbb{P}(S_{n, \a, \hb}(j_{1}) \neq 0, ... , S_{n, \a, \b}(j_{r}) \neq 0) \\ &&\quad=
\mathbb{P}(S_{n - j_{1} + 1, \a, \hb}(1) \neq 0, ... , S_{n - j_{r} + 1, \a, \hb}(j_{r} - j_{1} + 1) \neq 0).
\end{eqnarray*}
Further
\begin{eqnarray*} &&
\mathbb{P}(S_{n - j_{1} + 1, \a, \hb}(1) 
\neq 0, ... , S_{n - j_{r} + 1, \a, \hb}(j_{r} - j_{1} + 1) \neq 0) \\
&&\quad=
\mathbb{P}(S_{n - j_{1} +
 1, \a, \hb} (1) = \a ,S_{n - j_{1} + 1, \a, \hb}(j_{2} - j_{1} + 1) \neq 0, ... ,
S_{n - j_{1} + 1, \a, \hb}(j_{r} - j_{1} + 1) \neq 0) 
\\&&\qquad+
\mathbb{P}(S_{n - j_{1} + 1, \a, \hb} (1) = \b ,S_{n - j_{1} + 1, \a, \hb}(j_{2} - j_{1} + 1) \neq 0, ... ,
S_{n - j_{1} + 1, \a, \hb}(j_{r} - j_{1} + 1) \neq 0 )
\\ &&\quad=
\mathbb{P}(S_{n - j_{1} + 1, \a, \hb}(j_{2} - j_{1} + 1) \neq 0, ... ,
S_{n - j_{1} + 1, \a, \hb}(j_{r} - j_{1} + 1) \neq 0) \hspace{1mm} |
\hspace{1mm} S_{n - j_{1} + 1, \a, \hb} (1) = \a)
\\ &&\qquad \cdot
\mathbb{P} (S_{n - j_{1} + 1, \a, \hb} (1) = \a)\\&&\quad+
\mathbb{P}(S_{n - j_{1} + 1, \a, \hb}(j_{2} - j_{1} + 1) \neq 0, ... , S_{n - j_{1} + 1, \a, \hb}(j_{r} - j_{1} + 1) \neq 0 \hspace{1mm} | \hspace{1mm} S_{n - j_{1} + 1, \a, \hb} (1) = \b) \\ &&\qquad\cdot
\mathbb{P} (S_{n - j_{1} + 1, \a, \hb} (1) = \b).
\end{eqnarray*}
Now consider two cases: \\
\textbf{Case 1:} $S_{n - j_{1} + 1, \a, \hb}(1) = \a$.
By Lemma \ref{SWCorner} and the induction hypothesis:
\begin{eqnarray*} &&
\mathbb{P}(S_{n - j_{1} + 1, \a, \hb}(j_{2} - j_{1} + 1) \neq 0, ... , S_{n - j_{1} + 1, \a, \hb}(j_{r} - j_{1} + 1) \neq 0) \hspace{1mm} | \hspace{1mm} S_{n - j_{1} + 1, \a, \hb} (1) = \a) \\ &&\quad= 
\mathbb{P}(S_{n - j_{1} - 1, \a, \hb}(j_{2} - j_{1} - 1) \neq 0, ... , S_{n - j_{1} - 1, \a, \hb}(j_{r} - j_{1} - 1) \neq 0) \\ &&\quad= \prod_{k=1}^{r-1}\frac{1}{(n - j_{1} + a + \hat{b} - r + k - 1)}.
\end{eqnarray*}
Also, by (\ref{1BOXA}), 
\[
\mathbb{P} (S_{n - j_{1} + 1, \a, \hb} (1) = \a) = \frac{\hat{b}}{(n - j_{1} + a + \hat{b})(n - j_{1} + a + \hat{b} - 1)}.
\]
Therefore,  
\begin{eqnarray*} &&
\mathbb{P}(S_{n - j_{1} + 1, \a, \hb}(1) = \a,  S_{n - j_{1} + 1, \a, \hb}(j_{2} - j_{1} + 1) \neq 0, ... , S_{n - j_{r} + 1, \a, \hb}(j_{r} - j_{1} + 1) \neq 0) \\ &&\quad=
\frac{\hat{b}}{(n - j_{1} + a + \hat{b})(n - j_{1} + a + \hat{b} - 1)} \cdot \prod_{k=1}^{r-1}\frac{1}{(n - j_{1} + a + \hat{b} - r + k - 1)}.
\end{eqnarray*}

 \textbf{Case 2:} $S_{n - j_{1} + 1, \a, \hb}(1) = \b$. By Lemma~\ref{BSWCorner} 
\begin{eqnarray*} &&
 \mathbb{P}(S_{n - j_{1} + 1, \a, \hb}(j_{2} - j_{1} + 1) \neq 0, ... , S_{n - j_{1} + 1, \a, \hb}(j_{r} - j_{1} + 1) \neq 0 \hspace{1mm} | \hspace{1mm} S_{n - j_{1} + 1, \a, \hb} (1) = \b) \\ &&\quad=
\mathbb{P} (S_{n-j_{1}, \a, \hb}(j_{2} - j_{1}) \neq 0, ... , S_{n - j_{1r}, \a, \hb}(j_{r} - j_{1}) \neq 0  \hspace{1mm} | \hspace{1mm} S_{n-j_{1}, \a, \hb} (n-j_{1}, 1) = \b) \\ &&\quad= 
\frac
{\mathbb{P}(S_{n-j_{1}, \a, \hb}(j_{2} - j_{1}) \neq 0, ... , S_{n -
    j_{1}, \a, \hb}(j_{r} - j_{1}) \neq 0, \hspace{1mm} S_{n-j_{1},
    \a, \hb} (n-j_{1}, 1) = \b)}{\mathbb{P}(S_{n-j_{1}, \a,
    \hb}(n-j_{1}, 1) = \b)}. 
\end{eqnarray*}
The numerator is equal to 
\begin{eqnarray*}&&\mathbb{P}(S_{n-j_{1}, \a, \hb}(j_{2} - j_{1}) \neq 0, ... , S_{n - j_{1}, \a, \hb}(j_{r} - j_{1}) \neq 0) \\ &&\quad- 
\mathbb{P}(S_{n-j_{1}, \a, \hb}(j_{2} - j_{1}) \neq 0, ... , S_{n - j_{1}, \a, \hb}(j_{r} - j_{1}) \neq 0, \hspace{1mm} S_{n-j_{1}, \a, \hb} (n-j_{1}, 1) = \a)  \\  &&\quad=
\mathbb{P}(S_{n-j_{1}, \a, \hb}(j_{2} - j_{1}) \neq 0, ... , S_{n - j_{1} , \a, \hb}(j_{r} - j_{1}) \neq 0) \\ &&\quad-
\mathbb{P}(S_{n-j_{1}, \a, \hb}(j_{2} - j_{1}) \neq 0, ... , S_{n - j_{1} , \a, \hb}(j_{r} - j_{1}) \neq 0 \hspace{1mm} | \hspace{1mm} S_{n-j_{1}, \a, \hb} (n-j_{1}, 1) = \a) \\ &&\qquad\cdot
\mathbb{P}(S_{n-j_{1}, \a, \hb} (n-j_{1}, 1) = \a).
\end{eqnarray*}
By \cite[Lemma~7.5]{HJ}  and the induction hypothesis 
the conditional probability above is
\begin{equation} \label{4b}
\mathbb{P}(S_{n-j_{1} - 1, \a, \hb} (j_{2} - j_{1} - 1) \neq 0, ... , S_{n - j_{1} - 1, \a, \hb}(j_{r} - j_{1} - 1) \neq 0) = \prod_{k=1}^{r-1} \frac{1}{n - j_{1} + a + \hat{b} - r + k - 1}. 
\end{equation}
By (\ref{1BOXB}), (\ref{1BOXD}) (and some algebra), the induction
hypothesis, and (\ref{1BOXA}), respectively, 
\begin{equation} \label{1b}
\mathbb{P} (S_{n - j_{1} + 1, \a, \hb} (1) = \b) = \frac{n - j_{1} + a - 1}{(n - j_{1} + a + \hat{b})(n - j_{1} + a + \hat{b} - 1)}
\end{equation}
\begin{equation} \label{2b}
\frac{1}{\mathbb{P}(S_{n-j_{1}, \a, \hb}(n-j_{1}, 1) = \beta)}  = \frac{n - j_{1} + a + \hat{b} - 1}{n - j_{1} + a - 1} 
\end{equation}
\begin{equation} \label{3b}
\mathbb{P}(S_{n-j_{1}, \a, \hb}(j_{2} - j_{1}) \neq 0, ... , S_{n -
  j_{1} + 1, \a, \hb}(j_{r} - j_{1}) \neq 0) = \prod_{k=1}^{r-1}
\frac{1}{n - j_{1} + a + \hat{b} - r + k 
} 
\end{equation}
\begin{equation}\label{5b}
\mathbb{P}(S_{n-j_{1}, \a, \hb} (n-j_{1}, 1) = \a) = \frac{\hat{b}}{(n - j_{1} + a + \hat{b} - 1)}.
\end{equation}
Combining (\ref{4b}) - (\ref{5b}): 
\begin{eqnarray*} &&
\mathbb{P}(S_{n - j_{1} + 1, \a, \hb}(j_{2} - j_{1} + 1) \neq 0, ... , S_{n - j_{1} + 1, \a, \hb}(j_{r} - j_{1} + 1) \neq 0, S_{n - j_{1} + 1, \a, \hb} (1) = \b) \\ &&\quad=
\frac{1}{n - j_{1} + a + \hat{b}} \cdot \Big( \prod_{k=1}^{r-1}
\frac{1}{n - j_{1} + a + \hat{b} - r + k 
} \\ &&\qquad-
\frac{\hat{b}}{n -j_{1} + a + \hat{b} - 1
} \prod_{k=1}^{r-1} \frac{1}{n - j_{1} + a + \hat{b} - r + k - 1} \Big).
\end{eqnarray*}
Adding Case 1 and Case 2: 
\begin{eqnarray*} &&
\mathbb{P} (S_{n, \a, \hb}(j_{1}) \neq 0, ... , S_{n, \a, \hb}(j_{r})
\neq 0) =
\frac{1}{n - j_{1} + a + \hat{b}} \cdot \prod_{k=1}^{r-1} \frac{1}{n -
  j_{1} + a + \hat{b} - r + k %- 1
} \\ &&\quad=
\prod_{k=1}^{r} \frac{1}{n - j_{1} + a + \hat{b} - r + k} 
=
\prod_{k=1}^{r} \frac{1}{n + a + b - r + k - 1}
\end{eqnarray*}
which proves our assertion  when (\ref{2_diff}) holds.

If  there exists $j_{k}$ such that $j_{k} = j_{k+1} - 1$, then
$\{S_{n, \a, \b}(j_{1}) \neq 0, ... , S_{n, \a, \beta}(j_{r}) \neq 0\}$ implies  that two boxes side by side on the $(n-i, i)$ diagonal are non-empty, which is impossible by the rules of a  staircase tableau. Therefore the probability is $0$.
\end{proof}

As our final result, we consider the number of symbols on the second
main diagonal, which we denote by $X_{n}$. Then $X_{n} =
\sum^{n-1}_{j=1} I_{j}$ where $I_{j} := I_{S_{n,\alpha, \beta}(j) \neq
  0}$. The asymptotic distribution of the number of symbols on the second main diagonal is given by:
\begin{theorem}\label{thm:symb} As $n\to\infty$, 
\begin{equation} \label{nzp2}
X_{n} \stackrel{d}{\rightarrow} Pois \left( 1 \right).
\end{equation}
\end{theorem}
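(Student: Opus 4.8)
The plan is to follow the factorial moment method already used in the proof of Theorem~\ref{DT}, now targeting the Poisson law with parameter $1$. By \cite[Theorem~20, Chapter~1]{B} it suffices to show that for every fixed $r\geq 1$ the $r$th factorial moment of $X_n$ converges to the $r$th factorial moment of $Pois(1)$, namely $1^r=1$, as $n\to\infty$. First I would expand $z^{X_n}=\prod_{j=1}^{n-1}(1+I_j(z-1))$ verbatim as in the proof of Theorem~\ref{DT}, with $I_j=I_{S_{n,\a,\b}(j)\neq 0}$, to arrive at
\[
\mathbb{E}(X_n)_r = r!\sum_{1\leq j_1<\dots<j_r\leq n-1}\mathbb{P}(I_{j_1}\cap\dots\cap I_{j_r}).
\]

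The decisive simplification, and the point where this argument is genuinely easier than that of Theorem~\ref{DT}, is that by Theorem~\ref{rNZero} the summand vanishes unless $(j_1,\dots,j_r)\in J_{r,n-1}$, and on $J_{r,n-1}$ it equals $\prod_{k=1}^{r}\frac{1}{n+a+b-r+k-1}$, a quantity that does \emph{not} depend on the particular indices $j_1,\dots,j_r$. Consequently the sum factors as this constant times the cardinality $|J_{r,n-1}|$, and no weighted summation is required; in particular there is no need for an analogue of Lemma~\ref{LA}, since all the index-dependence that Lemma~\ref{LA} was built to handle has disappeared.

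The remaining step is the purely combinatorial count of $|J_{r,n-1}|$, the number of $r$-subsets of $\{1,\dots,n-1\}$ whose gaps are all at least $2$. The substitution $i_k:=j_k-(k-1)$ gives a bijection onto strictly increasing $r$-tuples in $\{1,\dots,n-r\}$, so $|J_{r,n-1}|=\binom{n-r}{r}$. Hence
\[
\mathbb{E}(X_n)_r = r!\binom{n-r}{r}\prod_{k=1}^{r}\frac{1}{n+a+b-r+k-1}
= \frac{(n-r)_r}{\prod_{k=1}^{r}(n+a+b-r+k-1)}.
\]
Finally I would read off the asymptotics: the numerator $(n-r)_r$ is a product of $r$ factors $n-r,\,n-r-1,\,\dots,\,n-2r+1$, each asymptotic to $n$, and the denominator is likewise a product of $r$ factors each asymptotic to $n$, so the ratio tends to $1=1^r$ as $n\to\infty$. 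This verifies the factorial moment condition and therefore establishes \eqref{nzp2}.

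I expect no serious obstacle: the substantive work has already been carried out in Theorem~\ref{rNZero}, and the only places requiring care are the ``no two adjacent'' count $|J_{r,n-1}|=\binom{n-r}{r}$ and the matching of the $r$ numerator factors against the $r$ denominator factors in the final limit. The contrast with Theorem~\ref{DT} is worth noting explicitly, since there the per-index probability carried a factor $b+j_{r-k+1}-2r+2k-1$ that forced the weighted sum of Lemma~\ref{LA}, whereas here the probability is constant over admissible tuples, which is precisely why the limiting parameter doubles from $1/2$ to $1$.
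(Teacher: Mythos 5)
Your proof is correct and takes essentially the same route as the paper's: the factorial moment method of Theorem~\ref{DT} together with the key observation that, by Theorem~\ref{rNZero}, the probability is constant over tuples in $J_{r,n-1}$ and vanishes otherwise, so that $\mathbb{E}(X_n)_r$ reduces to $r!\,|J_{r,n-1}|$ times that constant. The only cosmetic difference is that you count $|J_{r,n-1}|=\binom{n-r}{r}$ exactly via the shift bijection $i_k=j_k-(k-1)$, while the paper uses the asymptotic estimate $|J_{r,n-1}|=\binom{n-1}{r}+O(n^{r-1})$; both give the limit $1$.
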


\begin{proof}
By Theorem~\ref{rNZero}  and the same argument as in Theorem~\ref{DT} 
\begin{eqnarray*}  
\mathbb{E}(X_{n})_{r} &=& r! \sum_{1 \leq j_{1} < ... < j_{r} \leq n - 1} \mathbb{P}(I_{j_{1}} \cap \ldots \cap I_{j_{r}})  
\\&=& r!|J_{r,n-1}|\prod_{k=1}^{r}\frac{1}{n + a + b - r + k -1}
\\ &=&r!\left( {n-1\choose r}+O(n^{r-1})\right) \prod_{k=1}^{r}\frac{1}{n + a + b - r + k -1}\\&
\approx& 
r! \cdot \frac{(n-1)^{r}}{r!n^{r}} 
\rightarrow 1 \quad \mbox{as\ } n \rightarrow \infty.
\end{eqnarray*}
The result  then follows by \cite[Theorem~20]{B}, as discussed in the proof of Theorem~\ref{DT}.
\end{proof}

%\bibliographystyle{abbrvnat}
% use the following instead if you encounter problems 
%\bibliographystyle{alpha}
%\bibliography{sample}
%\label{sec:biblio}

\end{document}